\tikzstyle{vertex}=[circle, draw, inner sep=0pt, minimum size=6pt]
\newcommand{\vertex}{\node[vertex]}
\theoremstyle{plain}
\newtheorem{thm}{Theorem}[section]
\newtheorem{prop}[thm]{Proposition}
\theoremstyle{definition}
\newtheorem{ques}[thm]{Question}
\newtheorem{obs}[thm]{Observation}
\newtheorem{prob}[thm]{Problem}
\theoremstyle{remark}
\newtheorem{rem}[thm]{Remark}
\newcommand{\ov}{\overline}
\newcommand{\g}{\Gamma}
\newcommand{\bb}{{\bf b}}
\newcommand{\bc}{{\bf c}}
\newcommand{\M}{{\mathcal M}}
\newcommand{\F}{{\mathcal F}}
\begin{document}


\title{On sign-symmetric signed graphs}

\author[E. Ghorbani, W.H. Haemers, H.R. Maimani and L. Parsaei Majd]{Ebrahim Ghorbani}

\address{E. Ghorbani, Department of Mathematics, K. N. Toosi University of Technology,
 P.O. Box 16765-3381, Tehran, Iran.}
\email{ghorbani@kntu.ac.ir}

\author[]{Willem H. Haemers}
\address{W.H. Haemers, Department of Econometrics and Operations Research,
Tilburg University, Tilburg, The Netherlands.}
\email{haemers@uvt.nl}

\author[]{Hamid Reza Maimani}

\address{H.R. Maimani, Mathematics Section, Department of Basic Sciences,
Shahid Rajaee Teacher Training University, P.O. Box 16785-163, Tehran,
Iran, Iran.}

\email{maimani@ipm.ir}

\author[]{Leila Parsaei Majd}

\address{L. Parsaei Majd, Mathematics Section, Department of Basic Sciences,
Shahid Rajaee Teacher Training University, P.O. Box 16785-163, Tehran,
Iran.}

\email{leila.parsaei84@yahoo.com}

\begin{abstract}
A signed graph is said to be  sign-symmetric if it is switching isomorphic to its negation.
Bipartite signed graphs are trivially sign-symmetric.
We give new constructions of non-bipartite sign-symmetric signed graphs.
Sign-symmetric signed graphs have a symmetric spectrum but not the other way around.
We present  constructions of signed graphs with symmetric spectra which are not sign-symmetric. This, in particular answers a problem posed by  Belardo, Cioab\u{a},  Koolen, and Wang (2018).

\end{abstract}


\subjclass[2010]{Primary: 05C22; Secondary: 05C50.}


\keywords{Signed graph, Spectrum}


\thanks{}


\maketitle


\section{Introduction}
Let $G$ be a graph with vertex set $V$ and edge set $E$.
All graphs considered in this paper are undirected, finite, and simple (without loops or multiple edges).

A {\em signed graph} is a graph in which every edge has been declared positive or negative. In fact, a signed graph $\g$ is a pair $(G, \sigma)$, where $G=(V, E)$ is a graph, called the underlying graph, and $\sigma : E \rightarrow \{-1, +1\}$ is the sign function
or signature. Often, we write $\g=(G, \sigma)$ to mean that the underlying graph is $G$. 
The signed graph $(G,-\sigma)=-\Gamma$ is called the {\em negation} of $\Gamma$. 
Note that if we consider a signed graph with all edges positive, we obtain an unsigned graph.

Let $v$ be a vertex of a signed graph $\g$. {\em Switching} at $v$ is changing the
signature of each edge incident with $v$ to the opposite one. Let $X\subseteq V$.
Switching a vertex set $X$ means reversing the signs of all edges between $X$
and its complement. Switching a set $X$ has the
same effect as switching all the vertices in $X$, one after another.

Two signed graphs $\g=(G, \sigma)$ and $\g' = (G, \sigma')$ are said to be {\em switching equivalent} if there is a series of switching that transforms $\g$ into $\g'$. If $\g'$ is isomorphic to a switching of $\g$, we say that $\g$ and $\g'$ are {\em switching isomorphic} and we write $\g\simeq \g'$.
The signed graph $-\g$ is obtained from $\g$ by reversing the sign of all edges. A signed graph $\g=(G, \sigma)$ is said to be {\em sign-symmetric} if $\g$ is switching isomorphic to $(G,-\sigma)$, that is: $\Gamma \simeq -\Gamma$.

For a signed graph $\g=(G, \sigma)$, the adjacency matrix $A=A(\g)=(a_{ij})$ is an $n\times n$ matrix in which $a_{ij} = \sigma(v_i v_j)$ if $v_i$ and $\upsilon_j$ are adjacent, and $0$ if they are not. Thus $A$ is a symmetric matrix with entries $0, \pm 1$ and zero diagonal, and conversely, any such matrix is the adjacency matrix of a signed graph.
The spectrum of $\g$ is the list of eigenvalues of its adjacency matrix with their multiplicities.
 We say that  $\g$ has a {\em symmetric spectrum} (with respect to the origin) if for each eigenvalue $\lambda$ of $\g$, $-\lambda$ is also an eigenvalues of $\g$ with the same multiplicity.\\

Recall that (see \cite{brou-hamer}), the \textit{Seidel adjacency matrix} of a graph $G$ with the adjacency matrix $A$ is the matrix
$S$ defined by
\[
S_{uv}= \left\{
\begin{array}{rl}
0&  \text{if} ~u=v\\
-1 & \text{if}~u\sim v\\
1 &\text{if}~u\nsim v
\end{array}\right.
\]
so that $S = J - I - 2A$. The Seidel adjacency spectrum of a graph is the spectrum of its
Seidel adjacency matrix. If $G$ is a graph of order $n$, then the Seidel matrix of $G$ is the adjacency matrix of a signed complete graph $\Gamma$ of order $n$ where the edges of $G$ are precisely the negative edges of $\Gamma$. 

\begin{prop}
Suppose $S$ is a Seidel adjacency matrix of order $n$.
If $n$ is even, then $S$ is nonsingular, and if $n$ is odd, $\mathrm{rank}(S)\geq n-1$.
In particular, if $n$ is odd, and $S$ has a symmetric spectrum, then $S$ has an eigenvalue $0$ of multiplicity 1.
\end{prop}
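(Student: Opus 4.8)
The plan is to reduce everything modulo $2$. Writing $S = J - I - 2A$, the term $2A$ vanishes mod $2$, so over $\mathbb{F}_2$ we have $S \equiv J - I \equiv J + I \pmod 2$; in particular the reduction $\ov S$ of $S$ over $\mathbb{F}_2$ depends only on $n$ and not at all on the underlying graph. The principle I would invoke is that for any integer matrix $M$ one has $\mathrm{rank}_{\mathbb{R}}(M) \geq \mathrm{rank}_{\mathbb{F}_2}(\ov M)$: if $\ov M$ has an $r \times r$ submatrix with nonzero determinant over $\mathbb{F}_2$, then the corresponding integer submatrix of $M$ has odd, hence nonzero, determinant, forcing the real rank to be at least $r$. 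So it suffices to compute the $\mathbb{F}_2$-rank of $J + I$.

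First I would determine the kernel of $J + I$ over $\mathbb{F}_2$. The equation $(J+I)x = 0$ is equivalent to $Jx = x$ (recall $-1 = 1$ in $\mathbb{F}_2$), and since $Jx = (\mathbf{1}^{\top} x)\,\mathbf{1}$ is a constant vector, every solution is a multiple of $\mathbf{1}$, i.e. $x \in \{\mathbf{0}, \mathbf{1}\}$. Now $\mathbf{1}$ satisfies $J\mathbf{1} = \mathbf{1}$ over $\mathbb{F}_2$ precisely when $n \equiv 1 \pmod 2$. Hence the kernel is trivial when $n$ is even and one-dimensional (spanned by $\mathbf{1}$) when $n$ is odd, giving $\mathrm{rank}_{\mathbb{F}_2}(J+I) = n$ for $n$ even and $n-1$ for $n$ odd. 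Combining this with the rank principle yields $\mathrm{rank}_{\mathbb{R}}(S) = n$ (equivalently, $\det S$ is odd, so $S$ is nonsingular) when $n$ is even, and $\mathrm{rank}_{\mathbb{R}}(S) \geq n-1$ when $n$ is odd, which are the first two assertions.

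For the last assertion, suppose $n$ is odd and $S$ has a symmetric spectrum. Since the nonzero eigenvalues occur in pairs $\{\lambda, -\lambda\}$ of equal multiplicity, their total multiplicity is even; as $S$ is real symmetric, all multiplicities are geometric and sum to $n$, which is odd, so the multiplicity $m(0)$ of the eigenvalue $0$ must be odd. On the other hand $m(0) = n - \mathrm{rank}_{\mathbb{R}}(S) \leq 1$ by the second assertion. An odd integer lying in $\{0,1\}$ equals $1$, so $m(0) = 1$, as required.

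I do not expect a serious obstacle once the modular reduction is spotted; the cleverness is entirely in observing that the graph-dependent part $2A$ disappears mod $2$. The one point that needs care is the direction of the rank inequality — passing to $\mathbb{F}_2$ can only lose rank, so it supplies the lower bound we want but never an upper bound — together with correctly tracking the parity of $n$ when deciding whether $\mathbf{1}$ lies in the kernel of $J+I$.
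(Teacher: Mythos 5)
Your proof is correct, and it rests on the same central observation as the paper's: reducing modulo $2$ kills the graph-dependent term $2A$, so that $S \equiv J-I \pmod 2$ (your $J+I$ over $\mathbb{F}_2$). Where you diverge is in how the rank bound is extracted from this. The paper works with determinants: $\det(S)\equiv\det(I-J)=1-n \pmod 2$, so for even $n$ the determinant of $S$ is odd and $S$ is nonsingular; for odd $n$ it then \emph{bootstraps} off the even case, observing that any principal $(n-1)\times(n-1)$ submatrix of $S$ is itself a Seidel matrix of even order, hence nonsingular, which gives $\mathrm{rank}(S)\geq n-1$. You instead compute the $\mathbb{F}_2$-rank of $J+I$ once and for all via its kernel (trivial for $n$ even, spanned by $\mathbf{1}$ for $n$ odd) and invoke the general transfer inequality $\mathrm{rank}_{\mathbb{R}}(M)\geq\mathrm{rank}_{\mathbb{F}_2}(\overline{M})$, whose proof via odd minors is precisely the mechanism the paper uses implicitly when it passes from nonsingular submatrices to a rank bound. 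Your route is more uniform (one kernel computation handles both parities) and makes the transfer principle explicit; the paper's is shorter, trading the kernel analysis for a single determinant evaluation plus the neat self-referential fact that principal submatrices of Seidel matrices are again Seidel matrices. Finally, you spell out the ``in particular'' clause --- the parity argument showing the multiplicity of $0$ is odd, hence exactly $1$ --- which the paper's proof leaves to the reader; that is a welcome addition rather than a deviation.
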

\begin{proof}
We have $\det(S) \equiv \det(I-J) (\mathrm{mod}~2)$, and $\det(I-J)=1-n$.
Hence, if $n$ is even, $\det(S)$ is odd. So, $S$ is nonsingular.
Now, if $n$ is odd, any principal submatrix of order $n-1$ is nonsingular.
Therefore, $\mathrm{rank}(S)\geq n-1$.
\end{proof}

The goal of this paper is to study sign-symmetric signed graphs as well as signed graphs with symmetric spectra.
It is known that bipartite signed graphs are sign-symmetric. We give new constructions of non-bipartite sign-symmetric graphs.
It is obvious that sign-symmetric graphs have a symmetric spectrum but not the other way around (see Remark~\ref{rem:excep8} below).
We present  constructions of graphs with symmetric spectra which are not sign-symmetric. This, in particular answers a problem posed in \cite{belardo2019}.

\section{Constructions of sign-symmetric graphs}



We note that the property that two signed graphs $\g$ and $\g'$ are switching isomorphic is equivalent to the existence of a `signed' permutation matrix $P$ such that $PA(\g)P^{-1}=A(\g')$. If $\g$ is a bipartite signed graph, then we may write its adjacency matrix as
$$A =\begin{bmatrix}
O & B \\B^\top & O
\end{bmatrix}.$$
It follows that $PAP^{-1}=-A$ for
$$P =\begin{bmatrix}
-I & O \\O & I
\end{bmatrix},$$
which means that bipartite graphs are `trivially' sign-symmetric. So it is natural to look for non-bipartite sign-symmetric graphs.
The first construction was given in \cite{a.m.p} as follows.
\begin{thm}\label{sym.thm}
Let $n$ be an even positive integer and $V_1$ and $V_2$ be two disjoint sets of size $n/2$. 
Let $G$ be an arbitrary graph with the vertex set $V_1$. Construct the complement of $G$, that is $G^{c}$, with the vertex set $V_2$. Assume that $\Gamma=(K_{n}, \sigma)$ is a signed complete graph in which $E(G)\cup E(G^{c})$ is the set of negative edges. Then the spectrum of $\Gamma$ is sign-symmetric.
\end{thm}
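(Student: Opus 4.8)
The plan is to write the adjacency matrix of $\Gamma$ in $2\times 2$ block form according to the partition $V=V_1\cup V_2$ and then to exhibit an explicit orthogonal matrix that conjugates $A(\Gamma)$ to $-A(\Gamma)$; being similar to its own negative forces the spectrum to be symmetric about the origin.

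First I would fix a bijection between $V_1$ and $V_2$ that realizes the copy of $G^c$ on $V_2$ as the complement of $G$, and index both parts by $\{1,\dots,n/2\}$. Since $\Gamma$ is a signed \emph{complete} graph whose negative edges are exactly $E(G)\cup E(G^c)$, every pair of vertices is adjacent and the only negative edges lie inside $V_1$ or inside $V_2$. Writing $A_G$ for the adjacency matrix of $G$ and $J,I,O$ for the all-ones, identity, and zero matrices of order $n/2$, the diagonal block indexed by $V_1$ carries $-1$ on the edges of $G$ and $+1$ on the remaining (non-$G$) pairs, so it equals $-A_G+(J-I-A_G)=J-I-2A_G$, which is precisely the Seidel matrix $S$ of $G$. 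The analogous computation on $V_2$, where now the edges of $G^c$ are the negative ones, gives $-A_{G^c}+A_G=-(J-I-2A_G)=-S$. Finally, all $V_1$--$V_2$ edges are positive, so the off-diagonal blocks are the full matrix $J$. Hence
\[
A(\Gamma)=\begin{bmatrix} S & J \\ J & -S \end{bmatrix}.
\]

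The key step is then to observe that the block anti-diagonal matrix
\[
Q=\begin{bmatrix} O & I \\ -I & O \end{bmatrix}
\]
satisfies $Q\,A(\Gamma)\,Q^{-1}=-A(\Gamma)$. Indeed, a direct multiplication gives $A(\Gamma)Q^{-1}=\bigl[\begin{smallmatrix} J & -S \\ -S & -J \end{smallmatrix}\bigr]$ and hence $Q\,A(\Gamma)\,Q^{-1}=\bigl[\begin{smallmatrix} -S & -J \\ -J & S \end{smallmatrix}\bigr]=-A(\Gamma)$. As $Q$ is orthogonal, $A(\Gamma)$ and $-A(\Gamma)$ are similar, so they share a spectrum; since the spectrum of $-A(\Gamma)$ is the negation of that of $A(\Gamma)$, the spectrum of $\Gamma$ is symmetric. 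Moreover $Q$ has entries in $\{0,\pm1\}$ with exactly one nonzero entry in each row and column, i.e.\ it is a signed permutation matrix, so by the criterion recalled above ($PA(\Gamma)P^{-1}=A(\Gamma')$ iff $\Gamma\simeq\Gamma'$) this in fact yields $\Gamma\simeq-\Gamma$, giving sign-symmetry outright.

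I do not expect a genuine obstacle. Once the Seidel-matrix form of the diagonal blocks is recognized, the conjugating matrix $Q$ is essentially forced by the requirement that it anticommute with $A(\Gamma)$, and everything reduces to a routine block multiplication. The only point needing care is the bookkeeping that converts the description ``negative edges $=E(G)\cup E(G^c)$'' into the blocks $S$ and $-S$, together with the observation that no edge joining $V_1$ and $V_2$ is negative, so that the off-diagonal blocks are the complete $J$ rather than some partial pattern.
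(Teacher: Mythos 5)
Your proposal is correct and follows essentially the same route as the paper: the paper subsumes this statement into its Theorem~\ref{sym.thm2}, whose proof conjugates $A=\bigl[\begin{smallmatrix} B & C\\ C & -B\end{smallmatrix}\bigr]$ by the block anti-diagonal signed permutation matrix, and your argument is exactly this specialized to $B=S$ (the Seidel matrix of $G$) and $C=J$. You even obtain the stronger conclusion that $\Gamma$ is sign-symmetric rather than merely having a symmetric spectrum, which is precisely how the paper interprets this construction.
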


Theorem \ref{sym.thm} says that for an even positive integer $n$, let $B$ be the adjacency matrix of an arbitrary graph on $n/2$ vertices. Then, the complete signed graph in which the negatives edges induce the disjoint union of $G$ and its complement, is sign-symmetric.

\subsection{Constructions for general signed graphs}

Let $\M_{r,s}$ denote the set of $r\times s$ matrices with entries from $\{-1,0,1\}$.
We give another construction generalizing the one given in Theorem~\ref{sym.thm}:

\begin{thm}\label{sym.thm2}
Let  $B,C\in\M_{k,k}$  be symmetric matrices where $B$ has a zero diagonal. 
Then the signed graph with the adjacency matrices
$$A =\begin{bmatrix}
B & C \\C & -B
\end{bmatrix}$$
is sign-symmetric on $2k$ vertices. 
\end{thm}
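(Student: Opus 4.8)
The plan is to use the matrix characterization of sign-symmetry recalled just before the statement: $\Gamma\simeq-\Gamma$ precisely when there is a signed permutation matrix $P$ with $PA(\Gamma)P^{-1}=-A(\Gamma)$. So the entire task reduces to exhibiting one such $P$ and checking the identity by a block computation. Before doing that, I would first verify that $A$ really is the adjacency matrix of a signed graph on $2k$ vertices: its entries lie in $\{-1,0,1\}$ since $B,C\in\M_{k,k}$; it is symmetric because $B^\top=B$ and $C^\top=C$ force $A^\top=A$; and it has zero diagonal since both diagonal blocks $B$ and $-B$ have zero diagonal. The hypothesis that $B$ has zero diagonal is exactly what is needed at this last point.

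Next I would propose the signed permutation matrix
$$P=\begin{bmatrix} O & I \\ -I & O\end{bmatrix},$$
which is orthogonal with $P^{-1}=P^\top=\begin{bmatrix} O & -I \\ I & O\end{bmatrix}$. It is genuinely a signed permutation matrix: it realizes the transposition swapping the two blocks of $k$ coordinates, together with a sign change on one half. The final step is the block multiplication $PAP^{-1}$. Computing $PA=\begin{bmatrix} C & -B \\ -B & -C\end{bmatrix}$ and then multiplying on the right by $P^{-1}$ yields $\begin{bmatrix} -B & -C \\ -C & B\end{bmatrix}=-A$, as required, and this uses no property of $B,C$ beyond symmetry.

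The only real content is choosing $P$ correctly, and I expect that to be where the one moment of insight lies. The naive swap $\begin{bmatrix} O & I \\ I & O\end{bmatrix}$ already exchanges the diagonal blocks $B\leftrightarrow -B$ in the right way, but it leaves the off-diagonal block $C$ with the wrong sign; inserting the minus sign on one of the two copies of $I$ is precisely what flips that $C$ while preserving the correct action on the diagonal. Once $P$ is in hand there is no obstacle, since the verification is a routine $2\times2$ block product. This also transparently generalizes Theorem~\ref{sym.thm}, which is the special case recovered by taking the underlying graph complete and $B,C$ suitably chosen.
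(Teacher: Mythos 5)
Your proof is correct and is essentially identical to the paper's: the paper conjugates $A$ by the signed permutation matrix $\begin{bmatrix} O & -I \\ I & O\end{bmatrix}$, which is just the inverse of your $P$, and performs the same block computation. Your additional check that $A$ is a genuine signed-graph adjacency matrix (which is where the symmetry and zero-diagonal hypotheses actually enter) is a nice touch the paper leaves implicit.
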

\begin{proof}
$$\begin{bmatrix}O & -I \\I & O\end{bmatrix}
\begin{bmatrix}B & C \\C & -B\end{bmatrix}
\begin{bmatrix}O & I \\-I & O\end{bmatrix}=
\begin{bmatrix}
-B & -C \\-C & B
\end{bmatrix}=-A$$
\end{proof}


Note that Theorem \ref{sym.thm2} shows that there exists a sign-symmetric graph for every even order.  

We define the family $\F$ of signed graphs as those which have an adjacency matrix satisfying the conditions given in Theorem~\ref{sym.thm2}. 
To get an impression on what the role of $\F$ is in the family of sign-symmetric graphs, we investigate small complete signed graphs.
All but one complete signed graphs with symmetric spectra of orders $4, 6, 8$ are illustrated in Fig.~\ref{sign4,6,8} (we show one signed graph in the switching class of the signed complete graphs induced by the negative edges). 
There is only one sign-symmetric complete signed graph of order $4$. There are four complete signed graphs with symmetric spectrum of order $6$, all of which are sign-symmetric, and twenty-one complete signed graphs with symmetric spectrum of order $8$, all except the last one are sign-symmetric, and together with the negation of the last signed graph, Fig.~\ref{sign4,6,8} gives all complete signed
graphs with symmetric spectrum of order $4$, $6$ and $8$. Interestingly, all of the above sign-symmetric signed graphs belong to $\F$.

The following proposition shows that $\F$ is closed under switching.


\begin{prop}
If $\g\in\F$ and $\g'$ is obtained from $\g$ by switching, then $\g'\in\F$.
\end{prop}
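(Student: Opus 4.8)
The plan is to give a matrix characterization of membership in $\F$ that is manifestly stable under switching. Throughout I fix a vertex ordering and use that switching $\g$ at a set $X$ replaces $A=A(\g)$ by $A'=DAD$, where $D$ is the diagonal $\{\pm1\}$-matrix with $D_{ii}=-1$ exactly when $i\in X$; note $D^2=I$. I claim that $\g\in\F$ if and only if there is a \emph{signed permutation matrix} $P$ (one $\pm1$ entry in each row and column) with
\[
P^2=-I \qquad\text{and}\qquad PA=-AP .
\]
Granting this, the proposition is immediate: if $P$ witnesses $\g\in\F$, set $P'=DPD$. Then $P'$ is again a signed permutation matrix, $(P')^2=DP^2D=-I$, and $P'A'=DP\,A\,D=D(-AP)D=-A'P'$, so $P'$ witnesses $\g'\in\F$.

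It remains to prove the characterization. For the forward direction, suppose $\g\in\F$; after relabeling we may assume $A=\left[\begin{smallmatrix}B&C\\ C&-B\end{smallmatrix}\right]$, and then $P=\left[\begin{smallmatrix}O&-I\\ I&O\end{smallmatrix}\right]$ satisfies $P^2=-I$ and, by the computation in the proof of Theorem~\ref{sym.thm2}, $PA=-AP$. Conjugating $P$ back through the relabeling permutation preserves all three properties (it fixes $P^2=-I$, keeps $P$ a signed permutation, and turns the anticommutation into the same relation for the original $A$), so such a $P$ exists in the original ordering as well.

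For the converse, the key observation is a normal form for $P$. From $P^2=-I$ the underlying permutation $\pi$ of $P$ satisfies $\pi^2=\mathrm{id}$ and has no fixed point (a fixed point would force a $\pm1$ entry squaring to $-1$); thus $\pi$ is a product of $k$ transpositions, where $n=2k$. Moreover, on a transposition $\{i,j\}$ one has $P^2e_i=s_is_j\,e_i$, so the two signs satisfy $s_is_j=-1$ and are therefore opposite. Hence $P$ has exactly $k$ positive and $k$ negative entries, one of each in every transposition. Relabeling so that, for the $m$-th transposition, the vertex carrying the $+1$ becomes $m$ and the one carrying the $-1$ becomes $k+m$ — a \emph{pure permutation}, not a switching — brings $P$ to the standard form $\left[\begin{smallmatrix}O&-I\\ I&O\end{smallmatrix}\right]$. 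Writing $\hat A$ for $A$ in this ordering, the relation $\left[\begin{smallmatrix}O&-I\\ I&O\end{smallmatrix}\right]\hat A=-\hat A\left[\begin{smallmatrix}O&-I\\ I&O\end{smallmatrix}\right]$ forces $\hat A=\left[\begin{smallmatrix}B&C\\ C&-B\end{smallmatrix}\right]$ with $B,C$ symmetric, while $B$ has zero diagonal because $\hat A$ does; thus $\g\in\F$.

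The main obstacle is precisely this converse: one must verify that $P^2=-I$ alone pins down the combinatorial type of $P$, namely a fixed-point-free involution whose transpositions are sign-balanced. That balance (equally many $+1$'s and $-1$'s) is what guarantees $P$ can be normalized to $\left[\begin{smallmatrix}O&-I\\ I&O\end{smallmatrix}\right]$ by relabeling alone, without an extra switching — the point that makes membership in $\F$, rather than merely membership in the switching class of some element of $\F$, survive. Once the characterization is established, switching-invariance is the one-line conjugation $P\mapsto DPD$ recorded above.
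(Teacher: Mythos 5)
Your proof is correct, but it takes a genuinely different route from the paper's. The paper argues directly and computationally: it reduces to switching at a single vertex, writes $A(\g)$ in block form with the rows and columns of vertex $1$ and vertex $k+1$ displayed, performs the switching, and then observes that interchanging the labels $1$ and $k+1$ restores a matrix of the shape $\left[\begin{smallmatrix}B&C\\ C&-B\end{smallmatrix}\right]$; closure under arbitrary switchings follows by iterating over vertices. You instead prove an invariant characterization: $\g\in\F$ if and only if some signed permutation matrix $P$ satisfies $P^2=-I$ and $PA=-AP$, after which closure under switching is the one-line conjugation $P\mapsto DPD$. Both halves of your characterization check out: the forward direction is the same anticommutation computation as in Theorem~\ref{sym.thm2} (transported through a relabeling), and your converse correctly extracts the normal form --- $P^2=-I$ forces the underlying permutation to be a fixed-point-free involution with $s_is_{\pi(i)}=-1$ on each transposition, so a pure relabeling brings $P$ to $\left[\begin{smallmatrix}O&-I\\ I&O\end{smallmatrix}\right]$, and then the relation $\hat P\hat A=-\hat A\hat P$ forces $\hat A=\left[\begin{smallmatrix}B&C\\ C&-B\end{smallmatrix}\right]$ with $C$ symmetric (forced by the relation) and $B$ symmetric with zero diagonal (inherited from $\hat A$). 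What your approach buys: it treats switching at an arbitrary vertex set in one stroke rather than vertex by vertex, it shows $\F$ is closed under the full switching-isomorphism relation (not just switchings in a fixed labeling), and it exposes the structural meaning of $\F$ as the class of signed graphs admitting a signed-permutation ``square root of $-I$'' anticommuting with the adjacency matrix. What the paper's approach buys: brevity and elementarity --- it needs no normal-form lemma, which is the technical heart and the main burden of your converse.
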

\begin{proof} Let $\g\in\F$. It is enough to show that if $\g'$ is obtained from $\g$ by switching with respect to its first vertex, then $\g'\in\F$.
We may write the adjacency matrix of $\g$ as follows:
$$A=\left[\begin{array}{c|cccccc|c|cccc}
0&&&&\bb^\top&&&c&&&\bc^\top&\\ 
&&&&&&&&&&&\\\hline
&&&&&&&&&&&\\
&&&&&&&&&&&\\
\bb&&&&B'&&&\bc&&&C'& \\
&&&&&&&&&&&\\
&&&&&&&&&&&\\\hline
&&&&&&&&&&&\\
c&&&&\bc^\top&&&0&&&-\bb^\top&\\ 
&&&&&&&&&&&\\\hline
&&&&&&&&&&&\\
&&&&&&&&&&&\\
\bc&&&&C'&&&-\bb&&&-B'&\\
&&&&&&&&&&&
\end{array}\right].$$
After switching with respect to the first vertex of $\g$, the adjacency matrix of the resulting signed graph is
$$\left[\begin{array}{c|cccccc|c|cccc}
0&&&&-\bb^\top&&&-c&&&-\bc^\top&\\ 
&&&&&&&&&&&\\\hline
&&&&&&&&&&&\\
&&&&&&&&&&&\\
-\bb&&&&B'&&&\bc&&&C'& \\
&&&&&&&&&&&\\
&&&&&&&&&&&\\ \hline
&&&&&&&&&&&\\
-c&&&&\bc^\top&&&0&&&-\bb^\top&\\ 
&&&&&&&&&&&\\\hline
&&&&&&&&&&&\\
&&&&&&&&&&&\\
-\bc&&&&C'&&&-\bb&&&-B'&\\
&&&&&&&&&&&
\end{array}\right].$$
Now by interchange the 1st and $(k+1)$-th rows and columns we obtain
\vspace{0.2 cm}
 $$\left[\begin{array}{c|cccccc|c|cccc}
0&&&&\bc^\top&&&-c&&&-\bb^\top&\\ 
&&&&&&&&&&&\\\hline
&&&&&&&&&&&\\
&&&&&&&&&&&\\
\bc&&&&B'&&&-\bb&&&C'& \\
&&&&&&&&&&&\\
&&&&&&&&&&&\\\hline
&&&&&&&&&&&\\
-c&&&&-\bb^\top&&&0&&&-\bc^\top&\\ 
&&&&&&&&&&&\\\hline
&&&&&&&&&&&\\
&&&&&&&&&&&\\
-\bb&&&&C'&&&-\bc&&&-B'&\\
&&&&&&&&&&&
\end{array}\right]$$
\\
which is a matrix of the form given in Theorem~\ref{sym.thm2} and thus $\g'$ is isomorphic with a signed graph in $\F$.
\end{proof}

In the following we present two constructions for complete sign-symmetric signed graphs using self-complementary graphs.

\subsection{Constructions for complete signed graphs}

In the following, the meaning of a self-complementary graph is the same as defined for unsigned graphs. 
Let $G$ be a self-complementary graph so that there is a permutation matrix $P$ such that $PA(G)P^{-1}=A(\ov G)$ and  $PA(\ov G)P^{-1}=A(G)$. It follows that
if $\g$ is a complete signed graph with $E(G)$ being its negative edges, then $A(\g)=A(\ov G)-A(G)$, (in other words, $A(\Gamma)$ is the Seidel matrix of $G$). It follows that
$PA(\g)P^{-1}=-A(\g)$. So we obtain the following:

\begin{obs}\label{obs:self} If $\g$ is a complete signed graph whose negative edges induce a self-complementary graph, then $\g$ is sign-symmetric.
\end{obs}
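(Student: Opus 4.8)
The plan is to translate the graph-theoretic hypothesis into a matrix identity and then exhibit a single signed permutation matrix realizing the required switching isomorphism. Let $\g$ be a complete signed graph on $n$ vertices whose negative edges induce a self-complementary graph $G$. First I would record the adjacency matrix explicitly: since every pair of distinct vertices is an edge, and the negative edges are exactly $E(G)$ while the positive edges are exactly $E(\ov G)$, the adjacency matrix is $A(\g)=A(\ov G)-A(G)$, which is precisely the Seidel matrix $S=J-I-2A(G)$ of $G$ already discussed in the excerpt.

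The key step is to use self-complementarity. By definition there is a permutation matrix $P$ with $PA(G)P^{-1}=A(\ov G)$ and $PA(\ov G)P^{-1}=A(G)$. I would then simply conjugate $A(\g)$ by $P$ and compute
\[
PA(\g)P^{-1}=P\bigl(A(\ov G)-A(G)\bigr)P^{-1}=PA(\ov G)P^{-1}-PA(G)P^{-1}=A(G)-A(\ov G)=-A(\g).
\]
Since a permutation matrix is in particular a signed permutation matrix, the relation $PA(\g)P^{-1}=-A(\g)$ is exactly the criterion stated earlier in the excerpt for $\g$ and $-\g$ to be switching isomorphic, i.e.\ $\g\simeq-\g$. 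Hence $\g$ is sign-symmetric by definition.

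I do not expect any real obstacle here, because the statement is essentially an unwinding of definitions once the Seidel-matrix description of a complete signed graph is in hand; the only point requiring a word of care is that the swap of $A(G)$ and $A(\ov G)$ under conjugation is achieved by one and the same permutation $P$, which is guaranteed by the symmetry of the self-complementary relation (a self-complementing permutation $P$ satisfies $P^{2}$ an automorphism, so conjugation by $P$ sends $A(G)\mapsto A(\ov G)\mapsto A(G)$). Thus the main content is purely the matrix computation above, and the proof is short.
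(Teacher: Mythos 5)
Your proposal is correct and matches the paper's own argument essentially verbatim: the paper also writes $A(\g)=A(\ov G)-A(G)$ (the Seidel matrix of $G$) and conjugates by the complementing permutation matrix $P$ to obtain $PA(\g)P^{-1}=-A(\g)$. Your extra remark justifying that a single $P$ swaps $A(G)$ and $A(\ov G)$ (which also follows directly from $P(J-I)P^{-1}=J-I$) is a minor point the paper simply asserts.
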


We give one more construction of sign-symmetric signed graphs based on self-complementary graphs as a corollary to Observation~\ref{obs:self}. We remark that a self-complementary graph of order $n$ exists whenever $n\equiv 0~\text{or}~1~(\mathrm{mod}~4)$.

\begin{prop} \label{cor:oddcase}
Let $G, H$ be two self-complementary graphs, and let $\g$ be a complete signed graph whose negative edges induce the join of $G$ and $H$ (or the disjoint union of $G$ and $H$).
Then $\g$ is sign symmetric. In particular, if $G$ has $n$ vertices, and if $H$ is a singleton, then the complete signed graph $\g$ of order $n+1$ with negative edges equal to $E(G)$ is sign-symmetric.
\end{prop}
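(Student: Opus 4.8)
The plan is to write the adjacency matrix of $\g$ in block form indexed by $V(G)$ and $V(H)$ and to exhibit an explicit signed permutation $Q$ with $QA(\g)Q^{-1}=-A(\g)$. Since the negative edges of $\g$ induce $G\cup H$ (resp.\ the join of $G$ and $H$), the diagonal blocks of $A(\g)$ are the Seidel matrices $S_G$ and $S_H$ of $G$ and $H$, while the off-diagonal block records the common sign of the edges running between $V(G)$ and $V(H)$: in a complete signed graph these are all positive (disjoint union) or all negative (join), so the off-diagonal block equals $\varepsilon J$ with $\varepsilon=+1$ or $\varepsilon=-1$, where $J$ denotes an all-ones block. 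Thus
$$A(\g)=\begin{bmatrix} S_G & \varepsilon J \\ \varepsilon J^\top & S_H \end{bmatrix}.$$
Note that, in contrast with Observation~\ref{obs:self}, I cannot simply argue that $G\cup H$ (or the join) is self-complementary — in general it is not — so I must combine the complementing symmetries of $G$ and $H$ by hand.

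The first step is to record what self-complementarity buys us at the level of Seidel matrices. If $P_1$ is a permutation matrix realizing $G\cong\ov G$, then $P_1A(G)P_1^{-1}=A(\ov G)$ and $P_1A(\ov G)P_1^{-1}=A(G)$; since $S_G=A(\ov G)-A(G)$, this yields $P_1S_GP_1^{-1}=-S_G$, and similarly $P_2S_HP_2^{-1}=-S_H$. The second step is the elementary but crucial observation that a permutation matrix fixes an all-ones block, $P_1JP_2^{-1}=J$. Hence conjugating $A(\g)$ by $P=\mathrm{diag}(P_1,P_2)$ negates both diagonal blocks but leaves the off-diagonal block $\varepsilon J$ untouched.

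The resulting matrix is not yet $-A(\g)$, because its off-diagonal block still has the wrong sign; this mismatch is the one point that needs care. The fix is to post-compose with the diagonal sign matrix $D=\mathrm{diag}(I,-I)$, whose effect on a $2\times 2$ block matrix is to flip exactly the two off-diagonal blocks while preserving the diagonal ones. Setting $Q=DP$ — a genuine signed permutation matrix, since $D$ and $P$ commute — one computes $QA(\g)Q^{-1}=D\bigl(PA(\g)P^{-1}\bigr)D=\begin{bmatrix} -S_G & -\varepsilon J \\ -\varepsilon J^\top & -S_H\end{bmatrix}=-A(\g)$, valid for the union and the join cases simultaneously. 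Therefore $\g\simeq-\g$, i.e.\ $\g$ is sign-symmetric.

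Finally, the ``in particular'' statement is the special case $H=K_1$: a single vertex is trivially self-complementary, and the disjoint union $G\cup K_1$ has exactly $E(G)$ as its edge set, so the complete signed graph of order $n+1$ with negative edges $E(G)$ is covered by the disjoint-union case above. I expect the only genuine obstacle to be pinning down the off-diagonal sign, which is precisely what the auxiliary matrix $D$ repairs; the remainder is a direct block computation.
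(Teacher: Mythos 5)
Your proof is correct. For comparison: the paper gives no written proof of this proposition at all --- it is simply announced as ``a corollary to Observation~\ref{obs:self}.'' As you rightly point out, the observation cannot be invoked as a black box, because the disjoint union (or join) of two self-complementary graphs is in general not self-complementary: $\ov{G\cup H}=\ov{G}\vee\ov{H}\cong G\vee H$, which is connected while $G\cup H$ is not. The argument the paper leaves implicit is presumably this: $-\g$ is the complete signed graph whose negative edges induce $\ov{G\cup H}\cong G\vee H$, and switching $\g$ on $V(G)$ converts the union case into the join case, so $\g\simeq-\g$. Your construction is exactly this argument written at the matrix level: $P=\mathrm{diag}(P_1,P_2)$ encodes the complementing isomorphisms of $G$ and $H$ (giving $P_iS P_i^{-1}=-S$ for the Seidel blocks, just as in the paragraph preceding Observation~\ref{obs:self}), and $D=\mathrm{diag}(I,-I)$ is precisely the switching with respect to $V(H)$. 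So in substance the two approaches coincide, but yours is self-contained, exhibits the required signed permutation $Q=DP$ explicitly, handles the union and join cases uniformly through $\varepsilon$, and supplies the switching step that the paper's ``corollary'' framing quietly omits. (One tiny remark: the observation that $D$ and $P$ commute is unnecessary --- any product of a signed diagonal matrix and a permutation matrix is a signed permutation matrix --- but it is harmless.)
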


In the following remark we present a sign-symmetric construction for non-complete signed graphs.

\begin{rem} Let $\g', \g''$ be two signed graphs which are isomorphic to $-\g', -\g''$, respectively. Consider the signed graph $\g$ obtained from joining $\g'$ and $\g''$ whose negative edges are the union of negative edges in $\g'$ and $\g''$. Then, $\g$ is sign-symmetric.
\end{rem}

\begin{rem}
By Proposition~\ref{cor:oddcase}, we have a construction of sign-symmetric complete signed graphs of order $n\equiv 0, 1~\text{or}~2~(\mathrm{mod}~4)$.
 All complete sign-symmetric signed graphs of order $5$ and $9$ (depicted in Fig.~\ref{sign5,9}) can be obtained in this way.
 There is just one sign-symmetric signed graph of order $5$ which is obtained by joining a vertex to a complete signed graph of order $4$ whose negative edges form a path of length $3$ (which is self-complementary).
 Moreover, there exist sixteen complete signed graphs of order $9$ with symmetric spectrum of which ten are sign-symmetric; the first three are not sign-symmetric, and when we include their negations we get them all. All of these ten complete sign-symmetric signed graphs can be obtained by joining a vertex to a complete signed graph of order $8$ whose negative edges induce a self-complementary graph. Note that there are exactly ten self-complementary graphs of order $8$. 
\end{rem}

\begin{thm}
There exists a complete sign-symmetric signed graph of order $n$ if and only if $n\equiv 0, 1~\text{or}~2~(\mathrm{mod}~4)$.
\end{thm}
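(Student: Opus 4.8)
The plan is to prove the two directions separately, with essentially all the work in the non-existence direction. For the existence direction I would just assemble the constructions already at hand: a self-complementary graph on $m$ vertices exists whenever $m\equiv 0$ or $1\pmod 4$, and by Observation~\ref{obs:self} the complete signed graph whose negative edges form such a graph is sign-symmetric, which covers $n\equiv 0,1\pmod 4$. For $n\equiv 2\pmod 4$ I would take a self-complementary graph on $n-1\equiv 1\pmod 4$ vertices and apply the singleton case of Proposition~\ref{cor:oddcase} (adjoin one vertex joined to all of it), producing a sign-symmetric complete signed graph of order $n$. This disposes of the ``if'' part for every $n\equiv 0,1,2\pmod 4$.

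For the ``only if'' part I want to show that no complete signed graph of order $n\equiv 3\pmod 4$ is sign-symmetric; I would in fact establish the stronger statement that no such graph even has a symmetric spectrum. The crux is a mod-$4$ sharpening of the determinant computation used in the opening Proposition, namely the following claim: for every Seidel matrix $S$ of order $n$ one has $\det S\equiv 1-n\pmod 4$. Granting this, suppose a complete signed graph of order $n\equiv 3\pmod 4$ had a symmetric spectrum. Since $n$ is odd, the nonzero eigenvalues pair off as $\pm\lambda$ and the single unpaired eigenvalue is forced to be $0$, so $\det S=0$; but the claim gives $\det S\equiv 1-3\equiv 2\pmod 4$, hence $\det S\neq 0$, a contradiction. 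As sign-symmetry implies a symmetric spectrum, this rules out $n\equiv 3\pmod 4$.

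It remains to sketch the determinant claim, which I expect to be the main obstacle. I would write $S=(J-I)-2A$, where $A\in\{0,1\}^{n\times n}$ is the adjacency matrix of the graph of negative edges, and expand $\det\bigl((J-I)-2A\bigr)$ in powers of the perturbation $-2A$. Every term of degree $\ge 2$ in this expansion carries a factor $(-2)^2=4$, so modulo $4$ only the constant and linear terms survive:
\[
\det S\;\equiv\;\det(J-I)\;-\;2\,\mathrm{tr}\!\bigl(\mathrm{adj}(J-I)\,A\bigr)\pmod 4 .
\]
Here $\det(J-I)=(-1)^{n-1}(n-1)\equiv 1-n\pmod 4$, and since $\mathrm{adj}(J-I)=(-1)^{n-1}\bigl(J-(n-1)I\bigr)$ and $\mathrm{tr}(A)=0$, the trace reduces to $(-1)^{n-1}$ times the sum of all entries of $A$, which is even because $A$ is symmetric with zero diagonal. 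Thus the linear term $-2\,\mathrm{tr}(\cdots)$ is divisible by $4$, leaving $\det S\equiv 1-n\pmod 4$.

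The only point demanding care is the justification of the displayed congruence: that the quadratic and higher terms of the determinant expansion vanish modulo $4$, and that the linear coefficient is exactly $\mathrm{tr}\bigl(\mathrm{adj}(J-I)\,A\bigr)$ (the first-order variation of the determinant). Once this is pinned down, the evaluation of $\det(J-I)$, of $\mathrm{adj}(J-I)$, and of the relevant traces is routine, and the theorem follows by combining the construction of the first paragraph with the contradiction of the second.
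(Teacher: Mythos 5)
Your proposal is correct, and at the top level it follows the same route as the paper: the existence half is assembled from the same constructions (Observation~\ref{obs:self} and Proposition~\ref{cor:oddcase}, which is exactly what the paper invokes with its ``using the previous results'' sentence), and the non-existence half for $n\equiv 3\pmod 4$ rests on the same key fact, namely that every Seidel matrix $S$ of order $n$ satisfies $\det S\equiv 1-n\pmod 4$, so that $\det S\neq 0$ when $n\equiv 3\pmod 4$, while a symmetric spectrum in odd order would force the eigenvalue $0$ and hence $\det S=0$. The genuine difference is how that key fact is obtained: the paper simply cites it as \cite[Corollary 3.6]{detsiedel}, whereas you prove it from scratch by writing $S=(J-I)-2A$ and expanding the determinant by column multilinearity, noting that every term of degree at least $2$ in $-2A$ is divisible by $4$, that the linear term is $-2\,\mathrm{tr}\bigl(\mathrm{adj}(J-I)\,A\bigr)$, and that this trace is even because $\mathrm{adj}(J-I)=(-1)^{n-1}\bigl(J-(n-1)I\bigr)$, $\mathrm{tr}(A)=0$, and the entry sum of $A$ equals $2|E|$; combined with $\det(J-I)=(-1)^{n-1}(n-1)\equiv 1-n\pmod 4$ this yields the congruence. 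I checked these steps and they are sound: the expansion is the standard multilinearity/Jacobi first-variation identity, and your argument is the natural mod-$4$ sharpening of the mod-$2$ computation in the paper's opening Proposition. What your version buys is self-containedness---the theorem no longer leans on an external result about Seidel determinants; what the paper's version buys is brevity, and the citation makes clear that the congruence is a known result rather than an ad hoc computation.
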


\begin{proof}
Using the previous results obviously one can construct a sign-symmetric signed graph of order $n$ whenever $n\equiv 0, 1~\text{or}~2~(\mathrm{mod}~4)$. Now, suppose that there is a complete sign-symmetric signed graph $\Gamma$ of order $n$ with $n\equiv 3~(\mathrm{mod}~4)$. By \cite[Corollary 3.6]{detsiedel}, the determinant of the Seidel matrix of $\Gamma$ is congruent to $1-n~(\mathrm{mod}~4)$. Since $n\equiv 3~(\mathrm{mod}~4)$, the determinant of the Seidel matrix (obtained from the negative edges of $\Gamma$) is not zero. Hence, we can conclude that all eigenvalues of $\Gamma$ are non-zero. Therefore, $\Gamma$ cannot have a symmetric spectrum, and also it cannot be sign-symmetric.
\end{proof}

In \cite{LS} all switching classes of Seidel matrices of order at most seven are given. There is a error in the spectrum of one of the graphs on six vertices in {\cite[Table 4.1]{LS}} (2.37 should be 2.24), except for that, the results in \cite{LS} coincide with ours.

\section{Positive and negative cycles}
A graph whose connected components are $K_2$ or cycles is called an  {\em elementary graph}.
Like unsigned graphs, the coefficients of the characteristic polynomial of the adjacency matrix of a signed graph $\g$ can be described in terms of elementary subgraphs of $\g$.
\begin{thm}[{\cite[Theorem 2.3]{belardo2}}]\label{blardo} Let $\g=(G, \sigma)$ be a signed graph and
\begin{equation}\label{eq:characteristic}
P_{\g}(x)=x^n+a_1x^{n-1}+ \cdots +a_{n-1}x+a_n
\end{equation}
be the characteristic polynomial of the adjacency matrix of $\g$. Then
$$a_i=\underset{B\in \mathcal{B}_i}\sum {(-1)^{p(B)}2^{\vert c(B)\vert}\sigma(B)},$$
where $\mathcal{B}_i$ is the set of elementary subgraphs of $G$ on $i$ vertices, $p(B)$ is the number of components of $B$, $c(B)$ the set of cycles in $B$, and $\sigma(B)=\prod_{C\in c(B)}\sigma(C)$.
\end{thm}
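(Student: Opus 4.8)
The plan is to derive this as the signed-graph analogue of the classical Sachs-type coefficient theorem, working directly from the permutation expansion of the principal minors of $A = A(\g)$. The starting point is the standard linear-algebra fact that the coefficients of the characteristic polynomial are, up to sign, sums of principal minors: writing $P_{\g}(x) = \det(xI - A)$, one has $a_i = (-1)^i \sum_{|S| = i} \det A[S]$, where $A[S]$ denotes the principal submatrix indexed by an $i$-subset $S$ of $V$. Thus it suffices to expand each $\det A[S]$ combinatorially and then sum over $S$.

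Next I would apply the Leibniz expansion $\det A[S] = \sum_{\pi} \mathrm{sgn}(\pi) \prod_{v \in S} a_{v\,\pi(v)}$, where $\pi$ ranges over permutations of $S$. Since $A$ has zero diagonal, any $\pi$ with a fixed point contributes $0$, so only fixed-point-free $\pi$ survive, and each such $\pi$ factors into cycles of length at least $2$. A nonzero product further forces $v \sim \pi(v)$ for every $v$, so the support of $\pi$ is a spanning subgraph of $G[S]$ whose components are edges (from the $2$-cycles of $\pi$) and cycles of length at least $3$ (from the longer cycles of $\pi$); that is, precisely an elementary subgraph $B \in \mathcal{B}_i$ on vertex set $S$.

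The heart of the argument is to group the surviving permutations by the elementary subgraph $B$ they determine and to compute their common sign and value. For the \emph{value}: each $2$-cycle contributes $a_{uv} a_{vu} = \sigma(uv)^2 = 1$, while a cycle $C$ of length at least $3$ contributes the product of its edge signs, namely $\sigma(C)$; hence $\prod_v a_{v\,\pi(v)} = \prod_{C \in c(B)} \sigma(C) = \sigma(B)$ for every realizing $\pi$. For the \emph{multiplicity}: an edge component admits a unique transposition, whereas each cycle of length at least $3$ can be traversed in two directions, both yielding $\pi$-cycles with the same sign and the same value; therefore $B$ is realized by exactly $2^{|c(B)|}$ permutations. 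Computing $\mathrm{sgn}(\pi)$ as the product of cycle signs ($-1$ per edge and $(-1)^{\ell-1}$ per $\ell$-cycle) and using the vertex-count identity that the component sizes of $B$ sum to $i$, the sign collapses to $(-1)^{i - p(B)}$.

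Assembling these pieces gives $\det A[S] = \sum_B (-1)^{i - p(B)} 2^{|c(B)|} \sigma(B)$, the sum over elementary subgraphs $B$ spanning $S$; summing over all $i$-subsets $S$ and multiplying by the $(-1)^i$ from the first step cancels the two factors $(-1)^i$ and yields $a_i = \sum_{B \in \mathcal{B}_i} (-1)^{p(B)} 2^{|c(B)|} \sigma(B)$, as claimed. I expect the main obstacle to be the sign bookkeeping in this last step: one must combine the per-cycle permutation signs correctly, confirm that the two orientations of each long cycle contribute identically (which relies on the symmetry of $A$, so that reversing a cycle merely reverses the order of the same edge signs without changing their product), and check that after absorbing the global $(-1)^i$ the exponent reduces exactly to $p(B)$. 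The factor $2^{|c(B)|}$, present for cycles but absent for edges, is the other place where an off-by-one slip is easy, so I would verify it against a small instance such as a single negative triangle.
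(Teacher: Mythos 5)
Your proof is correct. Note that the paper itself offers no proof of this statement---it is quoted from Belardo and Simi\'{c} \cite[Theorem 2.3]{belardo2}---so there is no in-paper argument to compare against; what you give is the standard Sachs-type coefficient proof (coefficients as signed sums of principal minors, Leibniz expansion restricted to fixed-point-free permutations, grouping of permutations by the elementary subgraph they induce), which is essentially how the result is established in the cited literature and in the unsigned case going back to Harary and Sachs. The delicate points are all handled soundly: transpositions contribute $a_{uv}a_{vu}=\sigma(uv)^2=1$, the two traversals of each cycle of length at least $3$ contribute equally because $A$ is symmetric (giving the factor $2^{|c(B)|}$), the permutation sign collapses to $(-1)^{i-p(B)}$, and the global factor $(-1)^i$ from the minor expansion cancels it down to $(-1)^{p(B)}$, matching the stated formula.
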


\begin{rem}\label{rem:sym}
It is clear that $\g$ has a symmetric spectrum if and only if in its characteristic polynomial \eqref{eq:characteristic}, we have
 $a_{2k+1}=0$, for $k=1, 2, \ldots$.
\end{rem}

In a signed graph, a cycle is called {\em positive} or {\em negative} if the product of
the signs of its edges is positive or negative, respectively.
We denote the number of positive and negative $\ell$-cycles by $c_{\ell}^{+}$ and $c_{\ell}^{-}$, respectively. 

\begin{obs}\label{obs:sign-sym} For sign-symmetric signed graph, we have
$$c_{2k+1}^{+}=c_{2k+1}^{-}~\text{for}~k=1, 2, \ldots .$$
\end{obs}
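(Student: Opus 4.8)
The plan is to prove this directly from the definition of sign-symmetry, by tracking what happens to the sign of a \emph{single} cycle under each of the three operations involved---switching, graph isomorphism, and negation---rather than working with the spectrum or the characteristic polynomial. The statement $\g\simeq-\g$ says precisely that some switching of $\g$ is isomorphic to $-\g$, so it suffices to understand how the counts $c_\ell^{+}$ and $c_\ell^{-}$ transform under these operations. The whole argument then collapses to one parity fact together with an elementary observation about negation.

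First I would establish the key step: switching preserves the sign $\sigma(C)$ of every cycle $C$. If we switch a vertex set $X$, an edge changes sign exactly when it joins $X$ to $V\setminus X$. Any cycle $C$ crosses the cut between $X$ and $V\setminus X$ an even number of times, so an even number of the edges of $C$ have their signs reversed, whence $\sigma(C)=\prod_{e\in C}\sigma(e)$ is unchanged. Thus switching fixes, cycle by cycle, the partition of cycles into positive and negative ones, and in particular preserves $c_\ell^{+}$ and $c_\ell^{-}$ for every $\ell$. Since a graph isomorphism maps $\ell$-cycles to $\ell$-cycles and respects edge signs, the switching-isomorphism relation $\simeq$ likewise preserves both $c_\ell^{+}$ and $c_\ell^{-}$.

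Next I would record the effect of negation. Replacing $\sigma$ by $-\sigma$ multiplies each edge sign by $-1$, so the sign of a cycle of length $\ell$ is multiplied by $(-1)^{\ell}$. For odd length $\ell=2k+1$ this reverses the sign of every such cycle, so negation interchanges positive and negative $(2k+1)$-cycles; that is, $c_{2k+1}^{+}(-\g)=c_{2k+1}^{-}(\g)$. Combining the two steps, the hypothesis $\g\simeq-\g$ gives $c_{2k+1}^{+}(\g)=c_{2k+1}^{+}(-\g)=c_{2k+1}^{-}(\g)$ for every $k\geq1$, which is the assertion.

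The only real content lies in the switching-invariance of cycle signs; once that even-crossing parity argument is in place, the negation step and the final chaining are immediate bookkeeping. I would avoid routing the proof through Theorem~\ref{blardo} and Remark~\ref{rem:sym}: although sign-symmetry forces the symmetric-spectrum condition $a_{2k+1}=0$, the coefficient $a_{2k+1}$ also collects contributions from elementary subgraphs with several components (an odd cycle together with disjoint edges and even cycles), so the vanishing of $a_{2k+1}$ does not by itself separate out the single-odd-cycle term $-2\bigl(c_{2k+1}^{+}-c_{2k+1}^{-}\bigr)$. The direct cycle-sign argument sidesteps this entanglement entirely.
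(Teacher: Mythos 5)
Your proof is correct, and it is the argument the paper intends: the paper states this as an Observation without writing out a proof, and the natural justification is exactly your chain --- switching preserves the sign of every cycle (the even-crossing parity argument), isomorphism preserves signed cycles, and negation flips the sign of every odd cycle, so $\g\simeq-\g$ forces $c_{2k+1}^{+}(\g)=c_{2k+1}^{+}(-\g)=c_{2k+1}^{-}(\g)$. Your closing remark is also well taken: the spectral route via Theorem~\ref{blardo} genuinely cannot prove this statement, as the paper itself confirms by exhibiting a complete signed graph with symmetric spectrum but $c_{7}^{+}\neq c_{7}^{-}$ (Fig.~\ref{fig:excep9}), so the direct cycle-sign argument is not merely a stylistic choice but the only viable one.
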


\begin{rem}\label{rem:sign-symm} If in a signed graph $\g$, $c_{2k+1}^{+}=c_{2k+1}^{-}$ for all $k=1, 2, \ldots$, then it is not necessary that $\g$ is sign-symmetric. See the complete signed graph given in Fig.~\ref{fig:excep8}. For this complete signed graph we have $c_{2k+1}^{+}=c_{2k+1}^{-}$ for all $k=1, 2, \ldots$, but it is not sign-symmetric. Moreover, one can find other examples among complete and non-complete signed graphs. For example, the signed graph given in Fig.~\ref{fig:non-sign} is a non-complete signed graph with the property that $c_{2k+1}^{+}=c_{2k+1}^{-}$ for all $k=1, 2, \ldots$, but it is not sign-symmetric.
\end{rem}

By Theorem \ref{blardo}, we have that $a_3=2(c_{3}^{-}-c_{3}^{+})$.
By Theorem \ref{blardo} and Remark~\ref{rem:sym} for signed graphs having symmetric spectrum, we have
$c_{3}^{+}=c_{3}^{-}$. Further, for each complete signed graph with a symmetric spectrum, it can be seen that $c_{5}^{+}=c_{5}^{-}$.
However, the equality $c_{2k+1}^{+}=c_{2k+1}^{-}$ does not necessarily hold for $k\ge3$.
The complete signed graph in Fig.~\ref{fig:excep9} has a symmetric spectrum for which $c_{7}^{+} \neq c_{7}^{-}$.
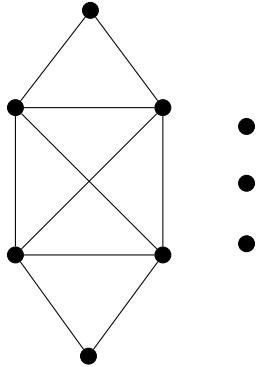
\begin{figure}[!htb]
\centering
\begin{tikzpicture}[scale=.7]
\vertex [fill] (a1) at (-5.3755369033828, 5.848814013183)[]{};
\vertex [fill] (a2) at (-4, 4)[]{};
\vertex [fill] (a3) at (-6.8, 4)[]{};
\vertex [fill] (b1) at (-6.8, 1.2)[]{};
\vertex [fill] (b2) at (-4, 1.2)[]{};
\vertex [fill] (b3) at (-5.4129901347276, -0.7230931414287)[]{};
\vertex [fill] (c1) at (-2.4107731585189, 3.6424782030517)[]{};
\vertex [fill] (c2) at (-2.4107731585189, 2.562292962675)[]{};
\vertex [fill] (c2) at (-2.4107731585189, 1.4131597282316)[]{};
\path
(a1) edge (a2)
(a1) edge (a3)
(a2) edge (a3)
(a2) edge (b1)
(a2) edge (b2)
(a3) edge (b1)
(a3) edge (b2)
(b1) edge (b2)
(b1) edge (b3)
(b2) edge (b3);
\end{tikzpicture}
\caption{The graph induced by negative edges of a complete signed graph on $9$ vertices with a symmetric spectrum but $c_{7}^{+} \neq c_{7}^{-}$}\label{fig:excep9}
\end{figure}

\begin{rem}
There are some examples showing that for a non-complete signed graph we have $c_{2k+1}^{+}=c_{2k+1}^{-}$ for all $k=1, 2, \ldots$, but their spectra are not symmetric. As an example see Fig.~\ref{fig:non-sign}, (dashed edges are negative; solid edges are positive).
\end{rem}

Now, we may ask a weaker version of the result mentioned in Remark~\ref{rem:sign-symm} as follows.

\begin{ques}\label{ques2}
Is it true that if in a complete signed graph $\g$, $c_{2k+1}^{+}=c_{2k+1}^{-}$ for all $k=1, 2, \ldots$, then $\g$ has a symmetric spectrum?
\end{ques}
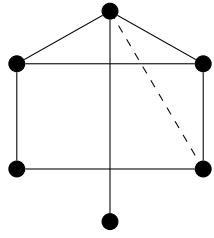
\begin{figure}[h!]
\centering
\begin{tikzpicture}[scale=.7]
\vertex [fill] (a1) at (0,2) []{};
\vertex [fill](a2) at (0,0) []{};
\vertex [fill](a3) at (1.77,-1) []{};
\vertex [fill](a4) at (3.54,0) []{};
\vertex [fill](a5) at (3.54,2) []{};
\vertex [fill](a6) at (1.77,3) []{};
\draw [dashed] (3.54,0) -- (1.77,3);
\path
(a1) edge (a2)
(a1) edge (a5)
(a1) edge (a6)
(a2) edge (a4)
(a3) edge (a6)
(a4) edge (a5)
(a5) edge (a6);
\end{tikzpicture}
 \caption{A signed graph with $c_{2k+1}^{+}=c_{2k+1}^{-}$ for $k=1, 2, \ldots$, but its spectrum is not symmetric}\label{fig:non-sign}
\end{figure}


\section{Sign-symmetric vs. symmetric spectrum}

\begin{rem}\label{rem:excep8}
Consider the complete signed graph whose negative edges induces the graph of Fig.~\ref{fig:excep8}.
This graph has a symmetric spectrum, but it is not sign-symmetric. Note that this complete signed graph has the minimum order with this property. Moreover, for this complete signed graph we have the equalities $c_{2k+1}^{+}=c_{2k+1}^{-}$ for $k=1, 2, 3$.
\end{rem}
\begin{figure}[!htb]
\centering
\begin{tikzpicture}[scale=.7]
\vertex [fill] (a1) at (0,0)[]{};
\vertex [fill] (a2) at (5,0)[]{};
\vertex [fill] (a3) at (2.5,4.33)[]{};
\vertex [fill] (b1) at (1.253,2.17)[]{};
\vertex [fill] (b2) at (2.5,0)[]{};
\vertex [fill] (b3) at (3.782,2.15)[]{};
\vertex [fill] (c1) at (4.42,3.73)[]{};
\vertex [fill] (c2) at (5,3)[]{};
\path
(a1) edge (a2)
(a2) edge (a3)
(a3) edge (a1)
(b1) edge (b2)
(b2) edge (b3)
(b3) edge (b1);
\end{tikzpicture}
\caption{The graph induced by negative edges of a complete signed graph on $8$ vertices with a symmetric spectrum but not sign-symmetric}\label{fig:excep8}
\end{figure}
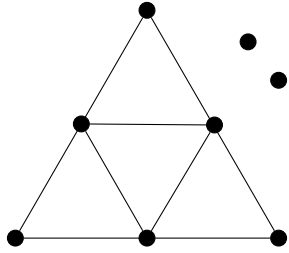

\begin{rem}
A {\em conference matrix} $C$ of order $n$ is an $n\times n$ matrix with zero diagonal and all off-diagonal entries $\pm 1$, which satisfies $CC^\top = (n-1)I$. If $C$ is symmetric, then $C$ has eigenvalues $\pm\sqrt{n-1}$. Hence, its spectrum is symmetric. Conference matrices are well-studied; see for example \cite[Section 10.4]{brou-hamer}.
An important example of a symmetric conference matrix is the Seidel matrix of the Paley graph extended with an isolated vertex, where the {\em Paley graph} is defined on the elements of a finite field $\bf{F}_{q}$, with $q\equiv 1~(\mathrm{mod}~4)$, where two elements are adjacent whenever the difference is a nonzero square
in $\bf{F}_q$. The Paley graph is self-complementary. Therefore, by Proposition~\ref{cor:oddcase}, $C$ is the adjacency
matrix of a sign-symmetric complete signed graph. However, there exist many more symmetric conference matrices, including several that are not sign-symmetric (see \cite{BMS}).
\end{rem}

In \cite{belardo2019}, the authors posed the following problem on the existence of the non-complete signed graphs which are not sign-symmetric but have symmetric spectrum.

\begin{prob}[\cite{belardo2019}]
Are there non-complete connected signed graphs whose spectrum is symmetric with respect to the origin but they are not sign-symmetric?
\end{prob}

We answer this problem by showing that there exists such a graph for any order $n\ge6$.
For $s\ge0$, define the signed graph $\g_s$ to be the graph illustrated in Fig.~\ref{fig:gs}.

\begin{figure}[h!]
\centering
\begin{tikzpicture}[scale=.7]
\vertex [fill] (a1) at (0,2) [label=left:$1$]{};
\vertex [fill](a2) at (0,0) [label=left:$2$]{};
\vertex [fill](a3) at (1.77,-1) [label=below:$3$]{};
\vertex [fill](a4) at (3.54,0) [label=right:$4$]{};
\vertex [fill](a5) at (3.54,2) [label=right:$5$]{};
\vertex [fill](a6) at (1.77,3) [label=above:$6$]{};
\vertex [fill](b1) at (1.77,4.6) []{};
\vertex [fill](b2) at (1.77,5.8) []{};
\draw [fill=black] (1.77,5.45) circle (.5pt);
\draw [fill=black] (1.77,5.2) circle (.5pt);
\draw [fill=black] (1.77,4.95) circle (.5pt);
\draw [dotted,rotate around={-90:(1.77,5.2)},line width=.5pt] (1.77,5.2) ellipse (1.15cm and 0.4cm);
\draw (3.45,5.5) node {$s$\,vertices};
\draw [dashed] (0,2)-- (3.54,0);
\path
(a1) edge (a2)
(a2) edge (a3)
(a3) edge (a4)
(a4) edge (a5)
(a5) edge (a6)
(a1) edge (a6)
(a4) edge (a6)
(b1) edge (a1)
(b1) edge (a5)
(b2) edge (a1)
(b2) edge (a5);
\end{tikzpicture}
 \caption{The graph $\g_s$ }\label{fig:gs}
\end{figure}
\begin{thm}\label{thm:problem} For $s\ge0$, the graph $\g_s$ has a symmetric spectrum, but it is not sign-symmetric.
\end{thm}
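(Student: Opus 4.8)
The plan is to treat the two assertions separately, disposing of the non-sign-symmetry first because it is the more transparent. To show that $\g_s$ is not sign-symmetric I would count its positive and negative $5$-cycles and invoke Observation~\ref{obs:sign-sym}, which forces $c_5^+=c_5^-$ for any sign-symmetric signed graph. In the core hexagon (the case $s=0$) a direct inspection shows there is exactly one $5$-cycle, namely $1\text{-}2\text{-}3\text{-}4\text{-}6\text{-}1$, and it is positive. Each added vertex is adjacent precisely to $1$ and $5$, so the $5$-cycles through such a vertex $b$ correspond to the length-$3$ paths from $1$ to $5$ in the core; there are exactly two of these, $1\text{-}4\text{-}6\text{-}5$ and $1\text{-}6\text{-}4\text{-}5$, and since the only negative edge is $\{1,4\}$, the first yields a negative and the second a positive $5$-cycle. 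As no $5$-cycle can pass through two of the mutually non-adjacent added vertices, I obtain $c_5^+=s+1$ and $c_5^-=s$, hence $c_5^+\neq c_5^-$ for every $s\ge0$, and Observation~\ref{obs:sign-sym} rules out sign-symmetry.

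For the symmetric-spectrum half I would exploit that the $s$ added vertices are mutually non-adjacent twins with common neighbourhood $\{1,5\}$. Each difference $e_{b_i}-e_{b_j}$ lies in the kernel of the adjacency matrix, contributing the eigenvalue $0$ with multiplicity $s-1$; on the orthogonal complement the action is that of the $7\times7$ matrix $\tilde A(s)$ obtained by collapsing the twins into a single vertex joined to $1$ and to $5$ by an edge of weight $\sqrt s$. It therefore suffices to prove that $\tilde A(s)$ has a symmetric spectrum, i.e. that $\det\!\big(xI-\tilde A(s)\big)$ is an odd polynomial in $x$.

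To compute this determinant I would expand along the row of the collapsed vertex. Its diagonal entry $x$ reproduces $x\,P_0(x)$, where $P_0(x)=\det(xI-A_0)=x^6-8x^4+17x^2-9$ is the (already even) characteristic polynomial of the core, which I would obtain once from the elementary-subgraph formula of Theorem~\ref{blardo} together with Remark~\ref{rem:sym}. The two off-diagonal entries $\sqrt s$ combine to give a correction of the form $-s\,\Sigma(x)$, where $\Sigma(x)$ is a sum of four $5\times5$ minors. The crucial point is that $\Sigma(x)=2x^5-11x^3+12x$ is odd, so that
\[
\det\!\big(xI-\tilde A(s)\big)=x^7-(8+2s)x^5+(17+11s)x^3-(9+12s)x
\]
involves only odd powers of $x$; equivalently the characteristic polynomial of $\g_s$ itself is $x^{s}\big(x^6-(8+2s)x^4+(17+11s)x^2-(9+12s)\big)$, which is manifestly symmetric about the origin.

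The main obstacle is exactly the verification that the $s$-dependent correction $\Sigma(x)$ carries no even-degree terms: since $P_0$ is already even, symmetry of the spectrum for all $s$ hinges entirely on this cancellation, and the bookkeeping of the four minors (two from each weighted edge to $1$ and to $5$) is where a careful, if routine, calculation is needed. As a consistency check I would compare with the value $c_5^+-c_5^-=1$ found above: by Theorem~\ref{blardo} the vanishing of the $x^{n-5}$-coefficient does not contradict this imbalance, because the $5$-cycle surplus is exactly offset by the triangle-plus-edge elementary subgraphs.
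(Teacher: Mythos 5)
Your argument for the non-sign-symmetric half is exactly the paper's: the same enumeration of positive and negative $5$-cycles giving $c_5^+=s+1$, $c_5^-=s$, followed by Observation~\ref{obs:sign-sym}. For the symmetric-spectrum half, however, you take a genuinely different route. The paper stays combinatorial: it applies Theorem~\ref{blardo} and Remark~\ref{rem:sym}, showing $a_3=a_5=a_7=0$ by listing the elementary subgraphs of $\g_s$ on $3$, $5$ and $7$ vertices and then asserting there are none on $8$ or more vertices. You instead exploit the $s$ pairwise non-adjacent twins: the differences $e_{b_i}-e_{b_j}$ contribute the eigenvalue $0$ with multiplicity $s-1$, and on the invariant complement the matrix is the $7\times7$ weighted quotient $\tilde A(s)$, whose characteristic polynomial is $x\,P_0(x)-s\,\Sigma(x)$, where $A_0$ is the adjacency matrix of the $6$-vertex core. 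I checked your numbers and they are correct: $P_0(x)=x^6-8x^4+17x^2-9$, and $\Sigma(x)=\mathrm{adj}(xI-A_0)_{11}+\mathrm{adj}(xI-A_0)_{55}+2\,\mathrm{adj}(xI-A_0)_{15}=2x^5-11x^3+12x$ (the $(1,5)$-cofactor equals $1$), so the characteristic polynomial of $\g_s$ is indeed $x^s\bigl(x^6-(8+2s)x^4+(17+11s)x^2-(9+12s)\bigr)$. Your approach buys more than the paper's: it yields the entire spectrum as a function of $s$, not merely its symmetry, and it sidesteps the elementary-subgraph bookkeeping entirely --- a real advantage, since the paper's closing claim that $\g_s$ contains no elementary subgraph on $8$ or more vertices is actually false for $s\ge2$ (four pairwise disjoint edges exist), although only odd orders matter, so the paper's proof survives in substance. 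Conversely, the paper's route uses only machinery it has already set up and avoids introducing the irrational weight $\sqrt{s}$. Two small points to tidy in a final write-up: the quotient step as phrased needs $s\ge1$ (for $s=0$ the graph is just the core, whose polynomial is already even), and the ``routine'' four-minor computation of $\Sigma$ must actually be carried out, since the whole symmetry claim rests on that cancellation.
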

\begin{proof} Let $S$ be the set of $s$ vertices adjacent to both $1$ and $5$.
The  positive $5$-cycles of $\g_s$ are $123461$ together with $u1645u$ for any $u\in S$, and the negative $5$-cycles  are  $u1465u$ for any $u\in S$.
Hence, $c_5^+=s+1$ and $c_5^-=s$. In view of Observation~\ref{obs:sign-sym}, this shows that $\g_s$ is not sign-symmetric.

Next, we show that $\g_s$ has a symmetric spectrum. It suffices to verify that $a_{2k+1}=0$ for $k=1, 2, \ldots$.

The graph $\g_s$ contains a unique positive cycle of length $3$: $4564$ and a unique negative cycle of length $3$: $1461$. It follows that $a_3=0$.

As discussed above, we have $c_5^+=s+1$ and $c_5^-=s$.
We count the number of positive and negative copies of $K_2\cup C_3$.
For the negative triangle $1461$, there are $s+1$ non-incident edges, namely $23$ and $5u$ for any $u\in S$ and for the positive triangle $4564$, there are $s+2$ non-incident edges, namely $12$, $23$ and $1u$ for any $u\in S$.
It follows that
$$a_5=-2((s+1)-s)+2((s+2)-(s+1)=0.$$

Now, we count the number of positive and negative elementary subgraphs on $7$ vertices:
\begin{itemize}
  \item[$C_7$:]  ~$s$ positive:  $u123465u$ for any $u\in S$, and no negative;
  \item[$K_2\cup C_5$:]  ~$2s$ positive: $u5\cup 123461$, and $23 \cup u1645u$  for any $u\in S$, and $s$ negative: $23 \cup u1465u$ for any $u\in S$;
  \item[$2K_2\cup C_3$:] ~$s+1$ positive: $u1\cup 23\cup 4564$ for any $u\in S$, and $s+1$ negative: $u5\cup 23\cup 1461$ for any $u\in S$;
  \item [$C_4\cup C_3$:]  ~none.
\end{itemize}
Therefore,
$$a_7=-2(s-0)+2(2s-s)-2((s+1)-(s+1))=0.$$
The graph $\g_s$ contains  no elementary subgraph on $8$ vertices or more. The result now follows.
\end{proof}

More families of non-complete signed graphs with a symmetric spectrum but not sign-symmetric can be found.
Consider the signed graphs $\g_{s,t}$ depicted in Fig.~\ref{fig:gs,t}, in which the number of upper repeated pair of vertices is $s\ge0$ and the number of upper repeated pair of vertices is $t\ge1$. In a similar fashion as in the proof of Theorem~\ref{thm:problem} it can be verified that $\g_{s,t}$ has a symmetric spectrum, but it is not sign-symmetric.
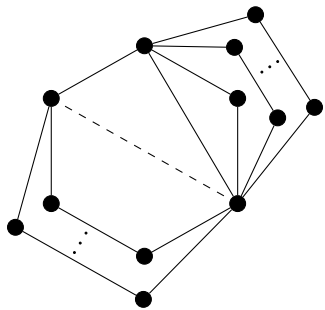
\begin{figure}[h!]
\centering
\begin{tikzpicture}[scale=.7]
\vertex[fill] (a1) at (0,2) []{};
\vertex [fill](a2) at (0,0) []{};
\vertex [fill](a3) at (1.77,-1) []{};
\vertex [fill](a4) at (3.54,0) []{};
\vertex [fill](a5) at (3.54,2) []{};
\vertex [fill](a6) at (1.77,3) []{};
\vertex [fill](b2) at (-.68,-.45) []{};
\vertex [fill](b3) at (1.75,-1.82) []{};
\vertex [fill](b4) at (4.3,1.63) []{};
\vertex [fill](b44) at (5,1.83) []{};
\vertex [fill](b6) at (3.48,2.97) []{};
\vertex [fill](b66) at (3.88,3.59) []{};
\draw [fill=black] (.66,-.56) circle (.5pt);
\draw [fill=black] (.55,-.75) circle (.5pt);
\draw [fill=black] (.44,-.93) circle (.5pt);
\draw [fill=black] (4,2.5) circle (.5pt);
\draw [fill=black] (4.15,2.6) circle (.5pt);
\draw [fill=black] (4.3,2.7) circle (.5pt);
\draw [dashed] (0,2)-- (3.54,0);
\path
(a1) edge (a2)
(a2) edge (a3)
(a3) edge (a4)
(a4) edge (a5)
(a5) edge (a6)
(a1) edge (a6)
(a4) edge (a6)
(b2) edge (b3)
(b2) edge (a1)
(b3) edge (a4)
(b4) edge (b6)
(b4) edge (a4)
(b6) edge (a6)
(b44) edge (b66)
(b44) edge (a4)
(b66) edge (a6)
;
\end{tikzpicture}
 \caption{The family of signed graphs $\g_{s,t}$}\label{fig:gs,t}
\end{figure}



\begin{figure}[!hb]
\minipage{1\textwidth}
\includegraphics[width=\linewidth]{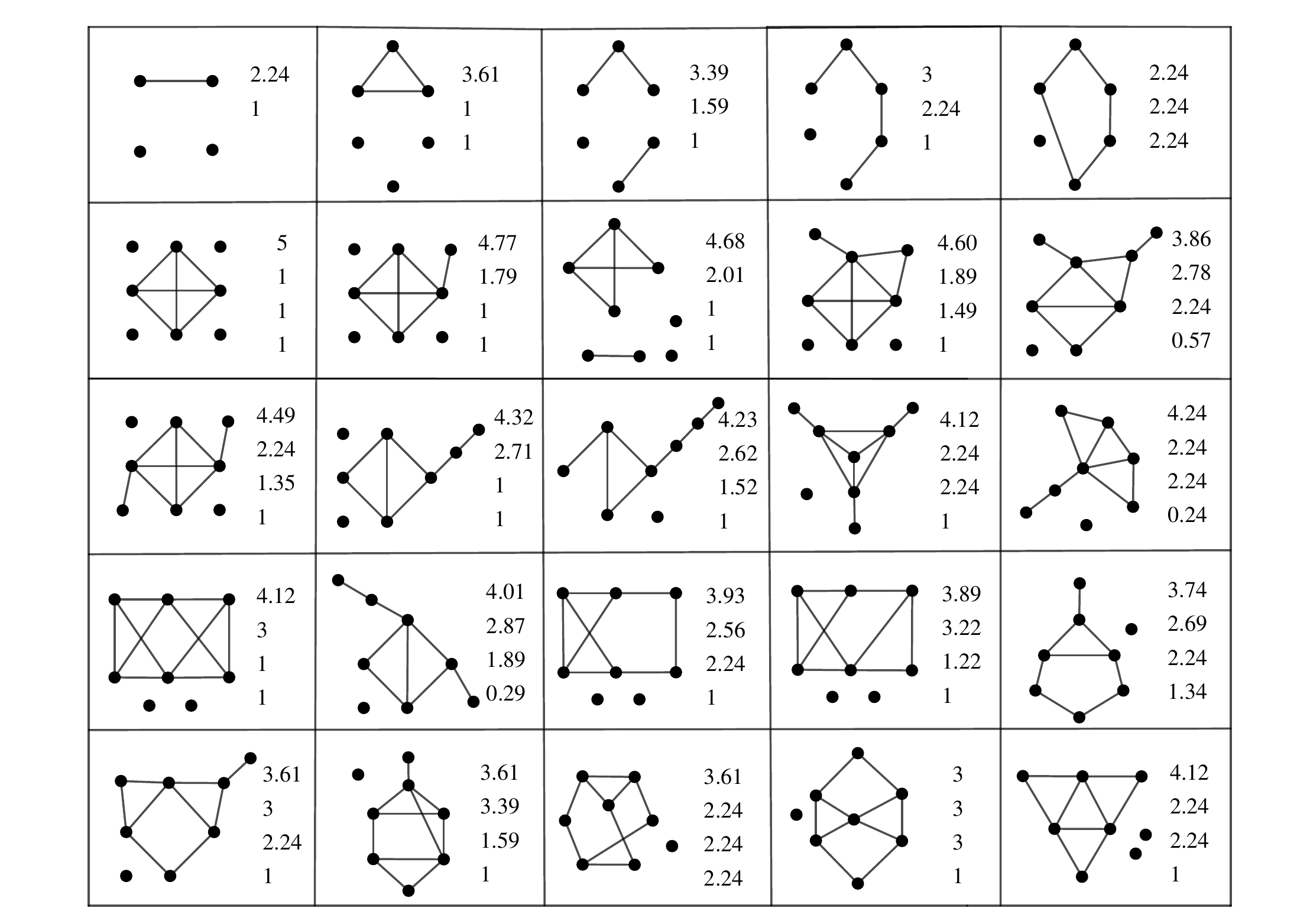}
\caption{Complete signed graphs (up to switching isomorphism and negation) of order $4, 6, 8$ having symmetric spectrum. The numbers next to the graphs are the non-negative eigenvalues. Only the last graph on the right is not sign-symmetric.}\label{sign4,6,8}
\endminipage
\end{figure}

\newpage

\begin{figure}[!htb]
\minipage{1\textwidth}
\includegraphics[width=\linewidth]{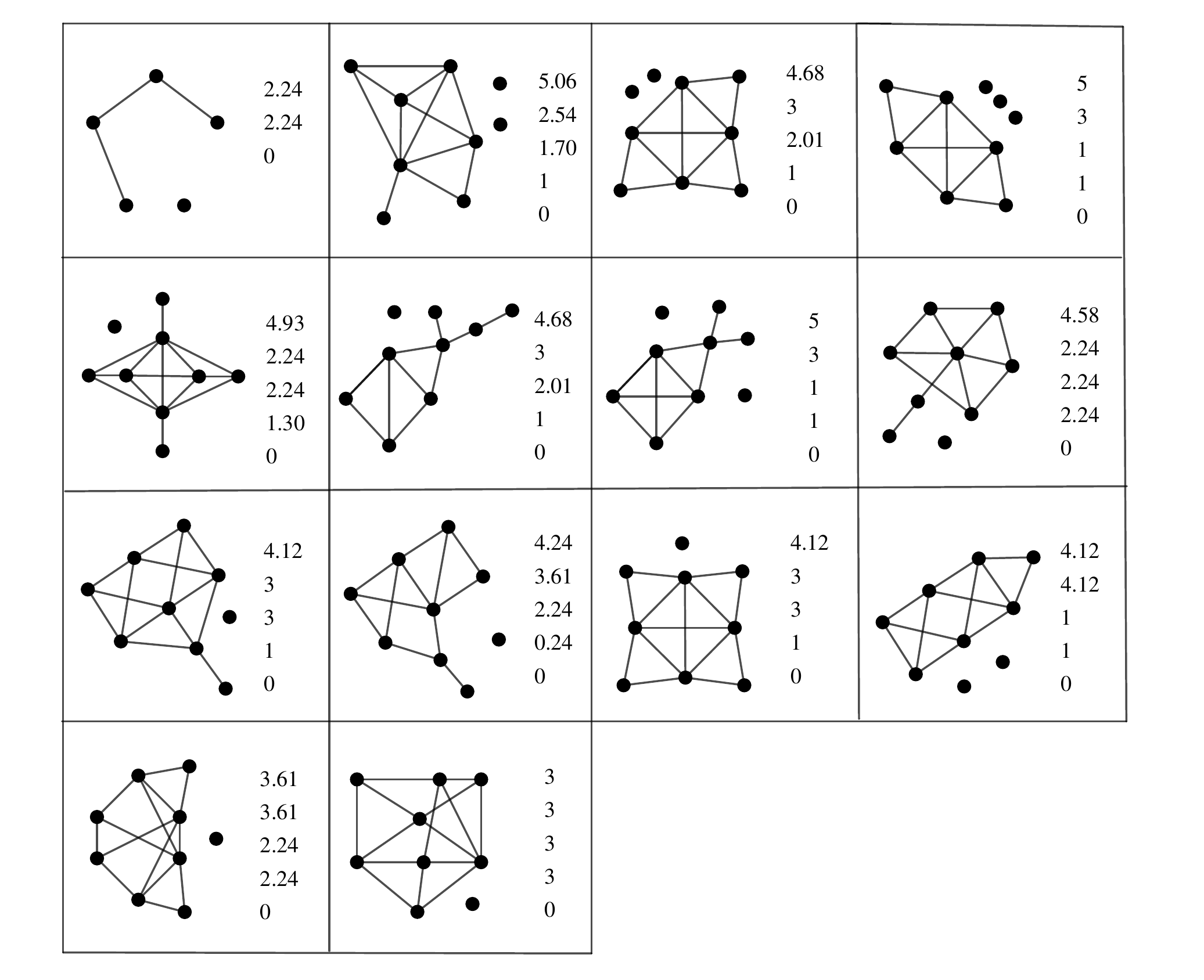}
\caption{Complete signed graphs (up to switching isomorphism and negation) of order $5, 9$ having symmetric spectrum. The numbers next to the graphs are the non-negative eigenvalues.The first three signed graphs of order $9$ are not sign-symmetric.}\label{sign5,9}
\endminipage
\end{figure}

\end{document}